\newcommand{\tLeq}{\eq{\tL}}
\DeclareMathOperator{\oppop}{opp}
\newcommand{\opp}[1]{#1^{\oppop}}
\DeclareMathOperator{\Ldiveq}{\LL^{\eqop}_{\divop}}
\DeclareMathOperator{\LDdiveq}{\LL^{\eqop}_{\D,\divop}}
\author{Silvain Rideau\thanks{Partially supported by ValCoMo (ANR-13-BS01-0006)} and Pierre Simon\footnotemark[1]}
\title{Definable and invariant types in enrichments of $\NIP$ theories}
\date{\today}
\begin{document}
\maketitle

\begin{abstract}
Let $T$ be an $\NIP$ $\LL$-theory and $\tT$ be an enrichment. We give a sufficient condition on $\tT$ for the underlying $\LL$-type of any definable (respectively invariant) type over a model of $\tT$ to be definable (respectively invariant). These results are then applied to Scanlon's model completion of valued differential fields.
\end{abstract}

Let $T$ be a theory in a language $\LL$ and consider an expansion $T\subseteq \tT$ in a language $\tL$. In this paper, we wish to study how invariance and definability of types in $T$ relate to invariance and definability of types in $\tT$. More precisely, let $\fU\models\tT$ be a monster model and consider some type $\widetilde{p}\in \TP(\fU)$ which is invariant over some small $M\models\tT$. Then the reduct $p$ of $\widetilde{p}$ to $\LL$ is of course invariant under the action of the $\tL$-automorphisms of $\fU$ that fix $M$ (which we will denote as $\tL(M)$-invariant), but there is, in general, no reason for it to be $\LL(M)$-invariant. Similarly, if $\widetilde{p}$ is $\tL(M)$-definable, $p$ might not be $\LL(M)$-definable.

When $T$ is stable, and $\phi(x;y)$ is an $\LL$-formula, $\phi$-types are definable by Boolean combinations of instances of $\phi$. It follows that if $\widetilde{p}$ is $\tL(M)$-invariant then $p$ is both $\LL(M)$-invariant and $\LL(M)$-definable. Nevertheless, when $T$ is only assumed to be $\NIP$, then this is not always the case. For example one can take $T$ to be the theory of dense linear orders and $\tL=\{\leq , P(x)\}$ where $P(x)$ is a new unary predicate naming a convex non-definable subset of the universe. Then there is a definable type in $\tT$ lying at some extremity of this convex set whose reduct to $\LL=\{\leq\}$ is not definable without the predicate.

In the first section of this paper, we give a sufficient condition (in the case where $T$ is $\NIP$) to ensure that any $\tL(M)$-invariant (resp. definable) $\LL$-type $p$ is also $\LL(M)$-invariant (resp. definable). The condition is that there exists a model $M$ of $\tT$ whose reduct to $\LL$ is uniformly stably embedded in every elementary extension of itself. In the case where $T$ is o-minimal for example, this happens whenever the ordering on $M$ is complete.

The main technical tool developed in this first section is the notion of external separability (Definition\,\ref{def:ext sep}). Two sets $X$ and $Y$ are said to be externally separable if there exists an externally definable set $Z$ such that $X\subseteq Z$ and $Y\cap Z = \emptyset$. In Proposition\,\ref{prop:ext sep is def}, we show that in $\NIP$ theory, external separability is essentially a first order property. The results about definable and invariant sets then follow by standard methods along with a "local representation" of $\phi$-types from \cite{SimInvNIP}.


The motivation for these results comes from the study of expansions of $\ACVF$ and in particular the model completion $\VDF$ defined by Scanlon \cite{ScaDValF} of valued differential fields with a contractive derivation, i.e. a derivation $\D$ such that for all $x$, $\val(\D(x))\geq\val(x)$. In the third section, we deduce, from the previous abstract results, a characterisation of definable (resp. invariant) types in models of $\VDF$ in terms of the definability (resp. invariance) of the underlying $\ACVF$-type. This characterisation also allows us to control the canonical basis of definable types in $\VDF$, an essential step in proving elimination of imaginaries for that theory in \cite{RidVDF}.

\subsection{Notation}

Let us now define some notation that will be used throughout the paper. When $\phi(x;y)$ is a formula, we implicitly consider that $y$ is a parameter of the formula and we define $\opp{\phi}(y;x)$ to be equal to $\phi(x;y)$.

We write $N\prec^+ M$ to denote that $M$ is a $\card{N}^{+}$-saturated and (strongly) $\card{N}^{+}$-homogeneous elementary extension of $N$.

Let $X$ be an $\LL(M)$-definable set (or a union of definable sets) and $A\subseteq M$. We denote by $X(A)$ the set of realisations of $X$ in $A$, i.e. the set $\{a\in A\mid M\models a\in X\}$. If $\Real$ is a set of definable sets (in particular a set of sorts), we define $\Real(A) := \bigcup_{R\in\Real} R(A)$.


\section{External separability}
\label{sec:ext sep}

\begin{definition}[ext def](Externally $\phi$-definable)
Let M be an $\LL$-structure, $\phi(x;t)$ be an $\LL$-formula and $X$ a subset of some Cartesian power of $M$.  We say that $X$ is externally $\phi$-definable if there exist $N\supsel M$ and a tuple $a\in N$ such that $X= \phi(M;a)$.
\end{definition}

\begin{definition}[ext sep](Externally $\phi$-separable)
Let M be an $\LL$-structure, $\phi(x;t)$ be an $\LL$-formula and $X$, $Y$ be subsets of some Cartesian power of $M$.  We say that $X$ and $Y$ are externally $\phi$-separable if there exist $N\supsel M$ and a tuple $a\in N$ such that $X\subseteq \phi(M;a)$ and $Y\cap\phi(M;a) = \emptyset$.
\end{definition}

We will say that $X$ and $Y$ are $\phi$-separable if $a$ can be chosen in $M$. Note that a set $X$ is externally $\phi$-definable if $X$ and its complement are externally $\phi$-separable.

\begin{proposition}[ext sep is def]
Let $T$ be an $\LL$-theory and $\phi(x;t)$ an $\NIP$ $\LL$-formula. Let $U(x)$ and $V(x)$ be new predicate symbols and let $\LL_{U,V} := \LL\cup\{U,V\}$. Then, there is an $\LL_{U,V}$-sentence $\theta_{U,V}$ and an $\LL$-formula $\psi(x;s)$ such that for all $M\models T$ and any enrichment $M_{U,V}$ of $M$ to $\LL_{U,V}$, we have:
\[\text{if }U\text{ and }V\text{ are externally }\phi\text{-separable, then }M_{U,V}\models \theta_{U,V}\]
and
\[\text{if }M_{U,V}\models \theta_{U,V} \text{, then }U\text{ and }V\text{ are externally }\psi\text{-separable.}\]
\end{proposition}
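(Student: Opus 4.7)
The strategy is to turn external $\phi$-separability into an infinite scheme of first-order conditions via compactness, then use $\NIP$ to collapse that scheme to a single sentence at the price of enlarging $\phi$ to a suitable formula $\psi$.

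Concretely, $U(M)$ and $V(M)$ are externally $\phi$-separable iff the partial type
\[\Sigma_{U,V}(t) \;:=\; \{\phi(u;t) : u \in U(M)\} \,\cup\, \{\neg\phi(v;t) : v \in V(M)\}\]
is consistent over $M$, i.e.\ by compactness iff every finite fragment is satisfiable in $M$. In $\LL_{U,V}$ this reads as the scheme $\{\theta_n : n < \omega\}$, where $\theta_n$ asserts that for all $u_1, \ldots, u_n$ with $U(u_i)$ and all $v_1, \ldots, v_n$ with $V(v_j)$ there is some $t$ with $\bigwedge_i \phi(u_i;t) \wedge \bigwedge_j \neg \phi(v_j;t)$. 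The first (easy) implication of the proposition then holds for any choice of $\theta_{U,V}$ among the $\theta_n$.

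The real work is in the second implication. Here $\NIP$ is decisive: $\phi(x;t)$ has finite VC-dimension, and consistent partial $\phi$-types over $M$ admit uniform control by an auxiliary $\LL$-formula $\psi(x;s)$, of the kind produced by the local representation of $\phi$-types in \cite{SimInvNIP} (or, equivalently, by honest definitions). The plan is to use this representation to produce a specific $\theta_{U,V}$ --- morally a single $\theta_n$ with $n$ depending only on the VC-dimension of $\phi$ --- which suffices to guarantee that $\Sigma_{U,V}$ is realized in some $N \supsel M$ by an instance $\psi(x;s^*)$ with $s^* \in N$, which is exactly external $\psi$-separability.

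The main technical obstacle is precisely this collapse from the infinite scheme to a single sentence: plain compactness only yields the conjunction $\bigwedge_n \theta_n$, and truncating it requires a uniform bound coming from $\NIP$. Accordingly $\psi$ will typically be richer than $\phi$ (e.g.\ a Boolean combination or ``majority'' of several instances of $\phi$), and pinning down both the right $n$ and the right $\psi$, uniformly in $M$, $U$, and $V$, is the heart of the argument.
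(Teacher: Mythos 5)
Your reduction of external $\phi$-separability to finite satisfiability of $\Sigma_{U,V}$, and hence to the scheme $(\theta_n)$, is correct, and the easy implication goes through exactly as you say. But for the hard implication you have only stated what must be proved --- that some single bounded $\theta_n$ already forces external $\psi$-separability for a suitable Boolean combination $\psi$ of instances of $\phi$ --- and you have not supplied the mechanism that makes this true. Compactness is of no help here: $\theta_n$ only controls fragments of $\Sigma_{U,V}$ of size at most $n$, and nothing in your outline converts that bounded information into control of arbitrary finite fragments. The tool you point to, the local representation of $\phi$-types from \cite{SimInvNIP} (or honest definitions), does not directly apply either: those results take an actual externally definable set as input, whereas at this stage the only hypothesis available is the bounded sentence $\theta_{U,V}$; there is no external parameter $d$ yet to feed into them. (The local representation result is indeed used in this paper, but later, for the invariance statements.)

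The paper's mechanism is the dual $(p,q)$-theorem (\cite{Mat}, \cite[Corollary\,6.13]{SimBook}), applied twice, and the sentence $\theta_{U,V}$ is tailored to it: it quantifies over $q_1$ elements of $U$ and $q_2$ elements of $V$, where $q_1$ comes from the $(p,q)$-theorem for families of VC-dimension $k_1 = \mathrm{VC}(\phi)$ and $q_2$ from the $(p,q)$-theorem for VC-dimension $k_2 = \mathrm{VC}(\bigvee_{i\leq n_1}\phi(x;t_i))$. First, for each $B_0 \subseteq V$ with $|B_0|\leq q_2$, one applies the theorem to the family $\{\phi(M;d) : d\in M,\ B_0\subseteq\neg\phi(M;d)\}$: the sentence $\theta_{U,V}$ guarantees that every $q_1$-subset of $U$ lies in a member, so every finite subset of $U$ is covered by $n_1$ members, and compactness yields tuples $d_1,\dots,d_{n_1}$ in an elementary extension with $U\subseteq\bigvee_{i}\phi(M;d_i)$ and $B_0$ disjoint from this union. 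Second, one applies the theorem again to the family of complements of such $n_1$-fold unions covering $U$, to cover $V$ by $n_2$ of them; this produces $\psi = \bigwedge_{j\leq n_2}\bigvee_{i\leq n_1}\phi(x;t_{i,j})$. Without this two-stage $(p,q)$ argument (or an equivalent substitute), the ``collapse from the infinite scheme to a single sentence'' that you correctly identify as the heart of the matter remains unproved.
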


\begin{proof}
Let $k_{1}$ be the VC-dimension of $\phi(x;t)$. By the dual version of the $(p,q)$-theorem (see \cite{Mat} and \cite[Corollary\,6.13]{SimBook}) there exists $q_{1}$ and $n_{1}$ such that for any set $X$, any finite $A\subseteq X$ and any $\mathcal{S}\subseteq\Part{X}$ of VC-dimension at most $k_{1}$, if for all $A_{0}\subseteq A$ of size at most $q_{1}$ there exist $S\in\mathcal{S}$ containing $A_{0}$, then there exists $S_{1}\ldots S_{n_{1}} \in \mathcal{S}$ such that $A\subseteq\bigcup_{i\leq n_{1}} S_{i}$. Let $k_{2}$ be the VC-dimension of $\bigcup_{i=1}^{n_{1}}\phi(x;t_{i})$ and $q_{2}$ and $n_{2}$ the bounds obtained by the dual $(p,q)$-theorem for families of VC-dimension at most $k_{2}$. Let
\[\theta_{U,V} := \forall x_{1}\ldots x_{q_{1}},y_{1}\ldots y_{q_{2}}\,\bigwedge_{i\leq q_{1}} U(x_{i})\wedge\bigwedge_{j\leq q_{2}} V(y_{j}) \imp \exists t\,\bigwedge_{i\leq q_{1}}\phi(x_{i};t)\wedge\bigwedge_{i\leq q_{2}}\neg\phi(y_{j};t).\]

Now,  let $M\prec^+ N\models T$, $U$ and $V$ be subsets of $M^{|x|}$ and $d\in N$ be a tuple. If $U\subseteq \phi(M;d)$ and $V\subseteq\neg\phi(M;d)$ then for any $A\subseteq U$ and $B\subseteq V$ finite there exists $d_{0}\in M$ such that $A\subseteq \phi(M;d_{0})$ and $B\subseteq \neg\phi(M;d_{0})$. In particular, $M_{U,V}\models \theta_{U,V}$.

Suppose now that $M_{U,V}\models\theta_{U,V}$. Let $B_{0}\subseteq V$ have cardinality at most $q_{2}$. The family $\{\phi(M;d)\mid d\in M$ a tuple and $B_{0}\subseteq\neg\phi(M;d)\}$ has VC-dimension at most $k_{1}$ (a subfamily always has lower VC-dimension). Because $M_{U,V}\models\theta_{U,V}$, for any $A_{0}\subseteq U$ of size at most $q_{1}$, there exists $d\in M$ such that $A_{0} \subseteq \phi(M;d)$ and $B_{0}\subseteq \neg\phi(M;d)$. It follows that for any finite $A\subseteq U$ there are tuples $d_{1}\ldots d_{n_{1}}\in M$ such that $A\subseteq \bigvee_{i\leq n_{1}}\phi(M;d_{i})$ and for all $i\leq n_{1}$, $B_{0}\subseteq\neg\phi(M;d_{i})$, in particular, $B_{0}\subseteq\neg(\bigvee_{i\leq n_{1}}\phi(M;d_{i}))$. By compactness, there exists tuple $d_{1}\ldots d_{n_{1}}\in N$ such that $U\subseteq \bigvee_{i\leq n_{1}}\phi(M;d_{i})$ and $B_{0}\subseteq\neg(\bigvee_{i\leq n_{1}}\phi(M;d_{i}))$.

The family $\{\neg(\bigvee_{i\leq n_{1}}\phi(M;d_{i}))\mid d_{i}\in N$ tuples and $U\subseteq\bigvee_{i\leq n_{1}}\phi(M;d_{i})\}$ has VC-dimension at most $k_{2}$. We have just shown that for any $B_{0}$ of size at most $q_{2}$, there is an element of that family containing $B_{0}$. It follows by the $(p,q)$-property and compactness that there exists tuples $d_{i,j}\in N\supsel M$ such that $V \subseteq \bigvee_{j\leq n_{2}}\neg(\bigvee_{i\leq n_{1}}\phi(M;d_{i,j})) = \neg(\bigwedge_{j\leq n_{2}}\bigvee_{i\leq n_{1}}\phi(M;d_{i,j}))$ and $U \subseteq \bigwedge_{j\leq n_{2}}\bigvee_{i\leq n_{1}}\phi(M;d_{i,j})$. Hence $U$ and $V$ are externally $\bigwedge_{j\leq n_{2}}\bigvee_{i\leq n_{1}}\phi(x;t_{i,j})$-separable.
\end{proof}

We would now like to characterise enrichments $\tT$ of $\NIP$ theories that do not add new externally separable definable sets, i.e. $\tL$-definable sets that are externally $\LL$-separable but not internally $\LL$-separable. We show that if there is one model of $\tT$ where this property holds uniformly, then it holds in all models of $T$.

\begin{proposition}[no new sep]
Let $T$ be an $\NIP$ $\LL$-theory (with at least two constants), $\tL\supseteq\LL$ be some language, $\tT\supseteq T$ be a complete $\tL$-theory and $\chi_{1}(x;s)$ and $\chi_{2}(x;s)$ be $\tL$-formulas. The following are equivalent:
\begin{thm@enum}
\item\label{cond:ext sep} For all $\LL$-formulas $\phi(x;t)$, all $M\models\tT$ and all $a\in M$ there exists an $\LL$-formula $\xi(x;z)$ such that if $\chi_{1}(M;a)$ and $\chi_{2}(M;a)$ are externally $\phi$-separated then they are $\xi$-separated;
\item For all $\LL$-formulas $\phi(x;t)$, there exists an $\LL$-formula $\xi(x;z)$ such that for all $M\models\tT$ and all $a\in M$, if $\chi_{1}(M;a)$ and $\chi_{2}(M;a)$ are externally $\phi$-separated then they are $\xi$-separated;
\item\label{cond:nice model} For all $\LL$-formulas $\phi(x;t)$, there exists an $\LL$-formula $\xi(x;z)$ and $M\models\tT$ such that for all $a\in M$, if $\chi_{1}(M;a)$ and $\chi_{2}(M;a)$ are externally $\phi$-separated then they are $\xi$-separated.
\end{thm@enum}
\end{proposition}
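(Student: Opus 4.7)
The implications $(2)\Rightarrow(1)$ and $(2)\Rightarrow(3)$ being immediate, the plan is to establish $(1)\Rightarrow(2)$ and $(3)\Rightarrow(2)$ by a common strategy: use Proposition~\ref{prop:ext sep is def} to produce a first-order $\tL$-formula bridging external $\phi$-separability and external $\psi$-separability for a suitable $\psi$, then transport it between models via completeness (for $(3)$) or compactness (for $(1)$). Concretely, I would fix an $\LL$-formula $\phi(x;t)$, apply Proposition~\ref{prop:ext sep is def} to obtain $\theta_{U,V}$ and an $\LL$-formula $\psi(x;s)$ (still NIP, as a positive Boolean combination of instances of $\phi$), and substitute $\chi_{1}(x;a)$ and $\chi_{2}(x;a)$ for $U(x)$ and $V(x)$ in $\theta_{U,V}$ to obtain an $\tL$-formula $\Theta_{\phi}(a)$. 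By the proposition, in any $M\models\tT$ and for any $a\in M$, external $\phi$-separability of $\chi_{1}(M;a),\chi_{2}(M;a)$ implies $M\models\Theta_{\phi}(a)$, which in turn implies their external $\psi$-separability.

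For $(3)\Rightarrow(2)$, the central observation is that hypothesis $(3)$ should be applied to $\psi$, not to $\phi$. Doing so yields an $\LL$-formula $\xi$ and a model $M_{\psi}\models\tT$ such that, for every $a\in M_{\psi}$, external $\psi$-separability implies $\xi$-separability. Writing $\mathrm{sep}_{\xi}(a)$ for the $\tL$-formula $\exists b\,\bigl(\forall x\,(\chi_{1}(x;a)\to\xi(x;b))\wedge\forall x\,(\chi_{2}(x;a)\to\neg\xi(x;b))\bigr)$, the chain above yields $M_{\psi}\models\forall a\,(\Theta_{\phi}(a)\to\mathrm{sep}_{\xi}(a))$. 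Since this is a first-order $\tL$-sentence and $\tT$ is complete, it holds in every model of $\tT$; composing with ``external $\phi$-separable $\Rightarrow\Theta_{\phi}$'' then shows that $\xi$ is the uniform witness demanded by $(2)$.

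For $(1)\Rightarrow(2)$, I would form the partial $\tL$-type $\Sigma(a):=\{\Theta_{\phi}(a)\}\cup\{\neg\mathrm{sep}_{\xi}(a):\xi\text{ an }\LL\text{-formula}\}$ and argue it is inconsistent with $\tT$: any realisation in some $(M,a)$ would satisfy $\Theta_{\phi}(a)$, hence externally $\psi$-separate the two sets by the proposition, yet admit no internal separation by any $\LL$-formula, contradicting $(1)$ applied to $\psi$ at $(M,a)$. Compactness then supplies finitely many $\xi_{1},\ldots,\xi_{n}$ with $\tT\vdash\forall a\,(\Theta_{\phi}(a)\to\bigvee_{i}\mathrm{sep}_{\xi_{i}}(a))$, and these are amalgamated into a single $\LL$-formula $\xi$ using an auxiliary block of ``selector'' variables ranging over the two distinguished constants. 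This last manoeuvre is the only point at which the ``at least two constants'' hypothesis is used.

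The main conceptual obstacle in both arguments is the recognition that the hypothesis must be applied to $\psi$ rather than to $\phi$: Proposition~\ref{prop:ext sep is def} only approximates external $\phi$-separability by a first-order formula at the cost of weakening the downstream conclusion to external $\psi$-separability, so one has to invoke the hypothesis at $\psi$ to close the gap. Everything else — completeness, compactness, and the combinatorial packaging of finitely many $\xi_{i}$ into one $\xi$ — is routine bookkeeping.
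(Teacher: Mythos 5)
Your proposal is correct, and its treatment of $(3)\Rightarrow(2)$ coincides with the paper's: apply Proposition~\ref{prop:ext sep is def} to get $\theta$ and $\psi$, invoke the hypothesis at $\psi$ rather than $\phi$ (the key point, which you identify explicitly), and transfer the resulting implication between $\theta(s)$ and internal $\xi$-separability to all models by completeness of $\tT$. Where you genuinely diverge is in closing the cycle: the paper proves $(1)\Rightarrow(3)$ by a compactness argument in the theory of the pair $(\fU,M)$ with $M\prec^{+}\fU$ --- there ``externally $\phi$-separated'' is witnessed directly by a parameter $b\in\fU^{\star}$, and condition $(1)$ is applied to $\phi$ itself at the nonstandard model $M^{\star}$ --- and then routes through $(3)\Rightarrow(2)$. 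You instead prove $(1)\Rightarrow(2)$ directly, using the sentence $\Theta_{\phi}$ from Proposition~\ref{prop:ext sep is def} as a first-order surrogate for external $\phi$-separability, running compactness on the partial type $\{\Theta_{\phi}(a)\}\cup\{\neg\mathrm{sep}_{\xi}(a)\}$ inside $\tT$ itself, and again applying the hypothesis to $\psi$. Your route avoids the pair construction entirely and makes the two nontrivial implications structurally parallel, at the cost of invoking Proposition~\ref{prop:ext sep is def} in both; the paper's pair argument is the more standard reflex but needs the (routine) observation that $M^{\star}\models\tT$ and that $\fU^{\star}$ realises all externally $\phi$-definable subsets of $M^{\star}$. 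Both arguments end with the same coding trick, using the two constants to merge finitely many $\xi_{i}$ into one formula, and both are sound.
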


\begin{proof}
The implications (ii) $\imp$ (i) and (ii) $\imp$ (iii) are trivial.

Let us now show that (iii) implies (ii). By Proposition\,\ref{prop:ext sep is def}, there exists an $\tL$-formula $\theta(s)$ and an $\LL$-formula $\psi(x;u)$ such that for all $N\models\tT$ and $a\in N$:
\[\chi_{1}(N;a)\text{ and }\chi_{2}(N;a)\text{ externally }\phi\text{-separated implies } N\models \theta(a)\]
and
\[N\models \theta(a) \text{ implies }\chi_{1}(N;a)\text{ and }\chi_{2}(N;a)\text{ externally }\psi\text{-separated.}\]
Let $M$ and $\xi$ be as in condition (iii) with respect to $\psi$. We have:
\[M\models \forall s\,\theta(s)\imp \exists u\,(\forall x\,(\chi_{1}(x;s)\imp\xi(x;u))\wedge(\chi_{2}(x;s)\imp\neg\xi(x;u))).\]
As $\tT$ is complete, this must hold in any $N\models\tT$. Thus, if $\chi_{1}(N;a)$ and $\chi_{2}(N;a)$ are externally $\phi$-separated, we have $N\models\theta(a)$ and hence $\chi_{1}(N;a)$ and $\chi_{2}(N;a)$ are $\xi$-separated.

There remains to prove that (i) $\imp$ (iii). Pick any $M \prec^{+}\fU\models\tT$. By (i), it is impossible to find, in any elementary extension $(\fU^{\star},M^{\star})$ of the pair $(\fU,M)$, a tuple $a\in M^{\star}$ and $b\in\fU^{\star}$ such that $\chi_{1}(M^{\star};a)$ and $\chi_{2}(M^{\star};a)$ are separated by $\phi(M^{\star};b)$, but they are not separated by any set of the form $\xi(M^{\star};c)$ where $\xi$ is an $\LL$-formula and $c\in M^{\star}$. By compactness, there exists $\xi_{i}(x;u_{i})$ for $i\leq n$ such that for all $a\in M$ if $\chi_{1}(M;a)$ and $\chi_{2}(M;a)$ are externally $\phi$-separated, there exists an $i$ such that they are $\xi_{i}$-separated. By classic coding tricks, we can ensure that $i=1$.
\end{proof}

\begin{definition}(Uniform stable embeddedness)
Let $M$ be an $\LL$-structure and $A\subseteq M$. We say that $A$ is uniformly stably embedded in $M$ if for all formulas $\phi(x;t)$ there exists a formula $\chi(x;s)$ such that for all tuples $b\in M$ there exists a tuple $a\in A$ such that $\phi(A,b) = \chi(A,a)$.
\end{definition}

\begin{remark}[strong 3]
If there exists $M\models\tT$ such that $\restr{M}{\LL}$ is uniformly stably embedded in every elementary extension, then such an $M$ witnesses Condition\,\ref{cond:nice model} for every choice of formulas $\chi_{1}$ and $\chi_{2}$.
\end{remark}

\begin{corollary}[def imp def]
Let $T$ be an $\NIP$ $\LL$-theory that eliminates imaginaries, $\tL\supseteq\LL$ be some language and $\tT\supseteq T$ be a complete $\tL$-theory. Suppose that there exists $M\models \tT$ such that $\restr{M}{\LL}$ is uniformly stably embedded in every elementary extension. Let $\phi(x;t)$ be an $\LL$-formula, $N\models \tT$, $A = \dcl[\tL]<\eqop>(A)\subseteq \eq{N}$ and $p\in\TP[x]<\phi>(N)$. If $p$ is $\tLeq(A)$-definable, then it is in fact $\LL(\Real(A))$-definable where $\Real$ denotes the set of all $\LL$-sorts.
\end{corollary}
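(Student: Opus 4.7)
The plan is to exhibit the positive and negative parts of the $\phi$-type $p$ as externally $\opp{\phi}$-separated subsets of $N$, apply Proposition~\ref{prop:no new sep} to find a separating $\LL$-formula with a real parameter in $N$, and then use elimination of imaginaries in $T$ to pin that parameter down to $\Real(A)$. Concretely, set $U = \{b\in N^{|t|} \mid \phi(x;b)\in p\}$ and $V = \{b\in N^{|t|}\mid \neg\phi(x;b)\in p\}$; these partition $N^{|t|}$, and the $\tL^{\eqop}(A)$-definability of $p$ lets us cut them out as $\chi_{1}(t;a)$ and $\chi_{2}(t;a)$ for $\tL$-formulas $\chi_{1},\chi_{2}$ and a single real parameter $a\in N$ (replacing each imaginary component by a real representative). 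Picking $c$ realising $p$ in some $\card{N}^{+}$-saturated $\fU\succ N$, we obtain $U = \opp{\phi}(N;c)$ and $V = \neg\opp{\phi}(N;c)$, so that $U$ and $V$ are externally $\opp{\phi}$-separated; note that $\opp{\phi}$ is still an $\NIP$ $\LL$-formula.

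By Remark~\ref{rem:strong 3}, the assumed model $M$ witnesses condition~\ref{cond:nice model} of Proposition~\ref{prop:no new sep} with respect to $\chi_{1},\chi_{2}$ and $\opp{\phi}$, so condition (ii) of that proposition holds. This supplies an $\LL$-formula $\xi(t;z)$ with the property that, for any $a'\in N$, whenever $\chi_{1}(N;a')$ and $\chi_{2}(N;a')$ are externally $\opp{\phi}$-separated they are in fact $\xi$-separated. Specialising to our $a$ yields $d\in N$ with $U\subseteq \xi(N;d)$ and $V\cap\xi(N;d)=\emptyset$; since $U$ and $V$ partition $N^{|t|}$, we actually have $U = \xi(N;d)$.

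It remains to descend $d$ into $\Real(A)$. By elimination of imaginaries in $T$, the $\LL$-definable set $U\subseteq N^{|t|}$ admits a real canonical parameter $e\in \Real(N)$: $U$ is $\LL(e)$-definable, and an $\LL$-automorphism of a sufficiently saturated extension fixes $U$ setwise iff it fixes $e$. But $U$ is also $\tL^{\eqop}(A)$-definable, so any $\tL^{\eqop}$-automorphism of that extension fixing $A$ pointwise fixes $U$, and, being a fortiori an $\LL$-automorphism, fixes $e$; as $A$ is $\tL^{\eqop}$-definably closed, this forces $e\in A\cap\Real(N) = \Real(A)$. Hence $U$ is $\LL(\Real(A))$-definable, which is exactly to say that $p$ is $\LL(\Real(A))$-definable. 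The genuinely new ingredient beyond Proposition~\ref{prop:no new sep}---and the main conceptual point of the proof---is this final descent: EI of $T$ allows us to replace the parameter $d$ supplied by the proposition, which a priori has no link to $A$, by a canonical parameter $e$ whose $\tL^{\eqop}$-invariance over $A$ then pins it into $\Real(A)$.
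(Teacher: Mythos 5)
Your proof is correct and follows essentially the same route as the paper: both exhibit the parameter set $U=\{b\mid\phi(x;b)\in p\}$ as externally $\opp{\phi}$-definable via a realisation of $p$, invoke Remark~\ref{rem:strong 3} and Proposition~\ref{prop:no new sep} to make it internally $\LL$-definable, and then use elimination of imaginaries plus the $\tLeq(A)$-invariance of $U$ to place its real code in $\dcleq[\tL](A)\cap\Real = \Real(A)$. Your added care in replacing imaginary parameters by real representatives before applying Proposition~\ref{prop:no new sep} is a detail the paper glosses over, but it does not change the argument.
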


\begin{proof}
Let $a\models p$. Then $X := \{m\in N\mid \phi(x;m)\in p\} = \{m\in {N\mid} \models\phi(a;m)\}$ is $\LL$-externally definable and $\tLeq(A)$-definable (by some $\tL$-formula $\chi$). It follows from Remark\,\ref{rem:strong 3} that Condition\,\ref{cond:nice model} holds and hence, by Condition\,\ref{cond:ext sep}, taking $\chi_{1} = \chi$ and $\chi_{2} = \neg\chi$, it follows that $X$ is $\LL$-definable.

Because $T$ eliminates imaginaries, we have just shown that we can find $\code{X}[\LL]\in \Real$. But $X$ is also $\tLeq(A)$-definable, hence any $\tLeq(A)$-automorphism of $\eq{N}$ stabilises $X(N)$ globally and therefore fixes $\code{X}[\LL]$. If we assume that $N$ is strongly $|A|^+$-homogeneous (and we can), it follows that $\code{X}[\LL]\in \dcleq[\tL](A) = A$. Thus $\code{X}[\LL]\in A\cap \Real  = \Real(A)$ and $X$ is $\LL(\Real(A))$-definable.
\end{proof}

We will need the following result, which is \cite[Proposition\,2.11]{SimInvNIP}.

\begin{proposition}[local inv]
Let $T$ be any theory, $\phi(x;y)$ an NIP formula, $M\prec^{+} N \models T$ and $p(x)$ a global $M$-invariant $\phi$-type. Let $b,b'\in \fU \supsel N$ such that both $\tp(b/N)$ and $\tp(b'/N)$ are finitely satisfiable in $M$ and $\tp[\opp{\phi}](b/N)=\tp[\opp{\phi}](b'/N)$. Then we have $\restr{p}{\fU}\vdash \phi(x;b) \iff \phi(x;b')$.
\end{proposition}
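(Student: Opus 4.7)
The plan is to argue by contradiction. By symmetry, assume $\restr{p}{\fU}\vdash\phi(x;b)\wedge\neg\phi(x;b')$. The overall strategy is to build an infinite alternating chain out of two global invariant extensions of $\tp(b/N)$ and $\tp(b'/N)$, then derive a contradiction with NIP of $\phi$ using the hypothesis on $\opp{\phi}$-types. As a first step, extend $\tp(b/N)$ and $\tp(b'/N)$ to global types $\hat q$ and $\hat q'$ finitely satisfiable in $M$; both are then $M$-invariant.

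Next, in a suitable elementary extension $\fU\prec^{+}\fU^{\star}$, construct inductively a sequence $(b_{i})_{i<\omega}$ by setting $b_{0}=b$, $b_{1}=b'$, $b_{2i}\models\restr{\hat q}{Nb_{<2i}}$, and $b_{2i+1}\models\restr{\hat q'}{Nb_{<2i+1}}$. By $M$-invariance of $\hat q$ and $\hat q'$, one has $b_{2i}\equiv_{M}b$ and $b_{2i+1}\equiv_{M}b'$ for every $i$. Picking $a\models\restr{p}{\fU^{\star}}$, the $M$-invariance of $p$ together with the reductio hypothesis yields $\models\phi(a;b_{2i})\wedge\neg\phi(a;b_{2i+1})$ for every $i<\omega$, so $\phi(a;-)$ alternates infinitely often along $(b_{i})$.

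The main obstacle is to turn this alternation into a violation of NIP. The sequence $(b_{i})$ is not a priori indiscernible, since it mixes two distinct invariant types, so NIP cannot be applied to it directly. This is where the hypothesis $\tp[\opp{\phi}](b/N)=\tp[\opp{\phi}](b'/N)$ becomes essential: since $b_{i}\equiv_{N}b$ for even $i$ and $b_{i}\equiv_{N}b'$ for odd $i$, the hypothesis implies that $\phi(d;b_{i})$ is independent of $i$ for every $d\in N$, so all $b_{i}$ share a common $\opp{\phi}$-type over $N$. I would then apply Ramsey and compactness to the sequence of consecutive pairs $(b_{2i},b_{2i+1})_{i<\omega}$ to extract an $N$-indiscernible sequence of pairs $(c^{0}_{i},c^{1}_{i})_{i<\omega}$ still satisfying $\models\phi(a;c^{0}_{i})\wedge\neg\phi(a;c^{1}_{i})$. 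The $N$-$\opp{\phi}$-invariance noted above ensures that the interleaved sequence $c^{0}_{0},c^{1}_{0},c^{0}_{1},c^{1}_{1},\ldots$ remains $\opp{\phi}$-indiscernible over $N$, along which $\phi(a;-)$ has infinite alternation; this contradicts NIP of $\phi$. The delicate point is this last step: one must carefully combine the indiscernibility of pairs with the $\opp{\phi}$-invariance to produce indiscernibility strong enough for NIP to apply.
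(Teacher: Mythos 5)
First, a point of reference: the paper does not prove this proposition at all --- it is quoted verbatim from \cite[Proposition\,2.11]{SimInvNIP} --- so your argument can only be judged against correctness, not against an in-paper proof. Your setup is fine: the reductio, the global $M$-finitely-satisfiable extensions $\hat q\supseteq\tp(b/N)$ and $\hat q'\supseteq\tp(b'/N)$, the alternating sequence $(b_i)$, and the observation that $M$-invariance of $p$ gives $\models\phi(a;b_{2i})\wedge\neg\phi(a;b_{2i+1})$ are all correct. The gap is exactly at the step you yourself flag as delicate, and it cannot be closed as stated. What you actually obtain is an $Na$-indiscernible sequence of pairs $(c^0_i,c^1_i)$ with $\models\phi(a;c^0_i)\wedge\neg\phi(a;c^1_i)$ and with all coordinates sharing a single $\opp{\phi}$-type over $N$. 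Since $\opp{\phi}(y;x)$ has only one $y$-variable, ``$\opp{\phi}$-indiscernible over $N$'' says nothing beyond ``each term has the same $\opp{\phi}$-type over $N$''; it carries no information about how distinct terms of the sequence relate to one another, whereas the alternation criterion for $\NIP$ requires genuine indiscernibility of the sequence being alternated along. Concretely, in DLO take $e'_0<e'_1<\cdots<a<\cdots<e_1<e_0$ all lying in one cut of a model $N$: the pairs $(e_i,e'_i)$ form an indiscernible sequence, all terms have the same $\opp{\phi}$-type over $N$ for $\phi(x;y)=x<y$, and $\phi(a;e_i)\wedge\neg\phi(a;e'_i)$ holds for every $i$, yet $\phi$ is $\NIP$. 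So the properties you have isolated do not, by themselves, produce a contradiction.

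The hypothesis you never invoke, $M\prec^{+}N$, is what excludes such configurations and is the real engine of the proof. A global type finitely satisfiable in $M$ decides an instance $\phi(e;y)$ only through the trace $\phi(e;M)$, and by saturation every finite tuple $e$ of the monster realises its type over $M$ inside $N$; hence the agreement of $\tp(b/N)$ and $\tp(b'/N)$ on the instances $\phi(e;y)$ for $e\in N$ upgrades to: $\hat q$ and $\hat q'$ contain exactly the same instances $\phi(e;y)^{\pm}$ for \emph{every} $e$ in any elementary extension. With this in hand one can, for each $\epsilon\in 2^n$, build the Morley sequence using $\hat q$ at positions where $\epsilon(i)=1$ and $\hat q'$ where $\epsilon(i)=0$, note that a realisation of $p$ over it realises the $\phi$-pattern $\epsilon$, and then transfer that pattern onto the single $\hat q$-Morley sequence by swapping the $\hat q'$-steps for $\hat q$-steps one at a time --- each swap preserves consistency of the pattern precisely because the two types agree on all $\opp{\phi}$-instances over all parameters, including the witness. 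The resulting sequence is shattered by $\phi$, contradicting $\NIP$. Without this global $\opp{\phi}$-agreement your interleaved sequence has no usable indiscernibility, so the proof as written does not go through.
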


\begin{proposition}[inv imp inv]
Let $T$ be an $\NIP$ $\LL$-theory, $\tL\supseteq\LL$ be some language and $\tT\supseteq T$ be a complete $\tL$-theory. Let $\Real$ denote the set of $\LL$-sorts. Suppose that there exists $M\models \tT$ such that $\restr{M}{\LL}$ is uniformly stably embedded in every elementary extension. Let $\phi(x;t)$ be an $\LL$-formula, $N\models \tT$ be sufficiently saturated, $A\subseteq N$ and $p\in\TP[x]<\phi>(N)$ be $\tL(A)$-invariant. Assume that every $\tL(A)$-definable set (in some Cartesian power of $\Real$) is consistent with some global $\LL(\Real(A))$-invariant type. Then $p$ is $\LL(\Real(A))$-invariant.
\end{proposition}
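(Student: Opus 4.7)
Set $A' := \Real(A)$. My strategy mirrors that of Corollary~\ref{cor:def imp def}, with Proposition~\ref{prop:local inv} now playing the role of the external-separability argument. First extend $p$ to a global $\tL(A)$-invariant $\phi$-type $\bar p$ over a larger monster $\fU \supsel N$, and pick a small $M \prec^+ \fU$ with $A \subseteq M$ and $\restr{M}{\LL}$ uniformly stably embedded in every elementary extension; in particular $\bar p$ is $M$-invariant in the sense of Proposition~\ref{prop:local inv}. It then suffices to take $b, b' \in \fU$ with $b \equiv^{\LL}_{A'} b'$ and prove $\phi(x;b) \in \bar p \iff \phi(x;b') \in \bar p$.

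Using the standing hypothesis together with compactness of the closed subspace of global $\LL(A')$-invariant types in the $\tL$-Stone space, the $\tL(A)$-type $\tp^\tL(bb'/A)$ extends to a global $\LL(A')$-invariant $\tL$-type $Q$ of pairs $(y,y')$. A long Morley sequence $(b_i,b'_i)_{i<\omega}$ in $Q$ over $M$ is $\LL(A')$-indiscernible, and each pair satisfies $\tp^\tL(b_i b'_i/A) = \tp^\tL(bb'/A)$. Hence by $\tL(A)$-invariance of $\bar p$ we obtain $\phi(x;b_i) \in \bar p \iff \phi(x;b) \in \bar p$ and $\phi(x;b'_i) \in \bar p \iff \phi(x;b') \in \bar p$ for every $i$. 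It remains to align the two columns: to show that $\phi(x;b_i) \in \bar p \iff \phi(x;b'_j) \in \bar p$ for some (hence every) pair of indices.

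For this alignment I invoke Proposition~\ref{prop:local inv}: by passing through $\tL$-coheirs of $\tp^\tL(b/M)$ and $\tp^\tL(b'/M)$, produce $\hat b, \hat b'$ with $\hat b \equiv^{\tL}_A b$, $\hat b' \equiv^{\tL}_A b'$, and $\tp(\hat b/N^\star), \tp(\hat b'/N^\star)$ finitely satisfiable in $M$ over a suitable intermediate $N^\star$. It then remains to arrange $\tp^{\opp\phi}(\hat b/N^\star) = \tp^{\opp\phi}(\hat b'/N^\star)$; once this is achieved, Proposition~\ref{prop:local inv} gives $\bar p \vdash \phi(x;\hat b) \iff \phi(x;\hat b')$, and $\tL(A)$-invariance closes the loop. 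The main obstacle is precisely this $\opp\phi$-alignment: $\LL(A')$-invariance of $Q$ equates the two coordinates' $\LL$-types only over the small base $A'$, whereas Proposition~\ref{prop:local inv} demands an $\opp\phi$-comparison over the much larger $N^\star$. This gap is bridged by the uniform stable embeddedness of $\restr{M}{\LL}$, which reduces external $\opp\phi$-behaviour over $M$ to internal $\opp\chi$-parameters for some fixed $\LL$-formula $\chi$; these $\opp\chi$-parameters can then be arranged to coincide by combining the $\LL(A')$-indiscernibility of the Morley sequence with the coheir choice above.
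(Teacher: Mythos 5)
Your architecture shares one key ingredient with the paper's proof, namely Proposition\,\ref{prop:local inv}, but there are two genuine gaps, both located exactly where you yourself place the ``main obstacle''. First, you pick $M\prec^{+}\fU$ with $A\subseteq M$ \emph{and} $\restr{M}{\LL}$ uniformly stably embedded in every elementary extension. The hypothesis only asserts the existence of \emph{one} model of $\tT$ with this property; nothing guarantees that a model containing your given $A$ has it (in the intended application, the uniformly stably embedded model is a Hahn field, which is very special). This is precisely why the paper develops Propositions\,\ref{prop:ext sep is def} and \ref{prop:no new sep}: the uniform stable embeddedness of a single model is converted, via the first-order character of external $\phi$-separability for $\NIP$ formulas, into a property (no new externally separable $\tL$-definable sets) that transfers to \emph{all} models of the complete theory $\tT$. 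Your argument never invokes this machinery, and your last step uses uniform stable embeddedness of your chosen $M$ directly, which is not available.

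Second, even granting such an $M\supseteq A$, the $\opp{\phi}$-alignment is not actually carried out, and the sketch offered does not close it. If $\hat{b}$ realizes a coheir of $\tp[\tL](b/M)$, then uniform stable embeddedness shows its $\opp{\phi}$-type over $N^{\star}$ is determined by $\tp[\opp{\chi}](b/M)$; so the alignment you need amounts to finding indices with $\tp[\opp{\chi}](b_{i}/M)=\tp[\opp{\chi}](b'_{j}/M)$. But the Morley sequence of $Q$ only guarantees that all \emph{pairs} $(b_{i},b'_{i})$ have the same type over $M$; the two coordinates of $Q$ are in general distinct $\LL(\Real(A))$-invariant types (they restrict to the possibly distinct $\tL(A)$-types of $b$ and $b'$), so $b_{i}$ and $b'_{j}$ need only agree over $\Real(A)$, not over $M$. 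Forcing agreement would require a single global $\LL(\Real(A))$-invariant type consistent simultaneously with $\tp[\tL](b/A)$ and $\tp[\tL](b'/A)$, which the stated hypothesis does not provide (it gives one type per definable set, and those two types may contain contradictory formulas). The paper sidesteps the construction of an aligned pair entirely: it argues contrapositively, assuming $p\vdash\phi(x,b_{1})\wedge\neg\phi(x,b_{2})$, using Proposition\,\ref{prop:local inv} to show that the partial type $\Delta$ asserting the existence of an aligned pair is \emph{inconsistent}, extracting by compactness an external separation of two $\tL(A)$-definable sets by a Boolean combination of instances of $\phi$, and then applying Proposition\,\ref{prop:no new sep} and Remark\,\ref{rem:strong 3} to turn this into an internal $\LL(\Real(A))$-separation of $b_{1}$ from $b_{2}$. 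I would rework your argument along those lines; the direct construction you propose appears to need strictly stronger hypotheses than the proposition assumes.
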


\begin{proof}
Let us first assume that $A\models\tT$. Let $b_{1}$ and $b_{2}$ be such $p(x)\vdash \phi(x,b_{1})\wedge \neg\phi(x,b_{2})$. We have to show that $\tp[\LL](b_{1}/A)\neq\tp[\LL](b_{2}/A)$. Let $p_{i} = \tp[\tL](b_{i}/A)$, $\Sigma(t)$ be the set of $\tL(N)$-formulas $\theta(t)$ such that $\neg\theta(A) = \emptyset$ and $\Delta(t_{1},t_{2})$ be the set:
\[p_{1}(t_{1})\cup p_{2}(t_{2})\cup\Sigma(t_{1})\cup\Sigma(t_{2})\cup\{\phi(n,t_{1})\iff\phi(n,t_{2})\mid n\in N\}.\]

If $\Delta$ were consistent, there would exist $b_{1}^{\star}$ and $b_{2}^{\star}$ such that $b_{i}\typeq{\tL(A)} b_{i}^{\star}$, $\tp[\tL](b_{i}^{\star}/N)$ is finitely satisfiable in $A$ and $\tp[\opp{\phi}](b_{1}^{\star}/N) = \tp[\opp{\phi}](b_{2}^{\star}/N)$. Applying Proposition\,\ref{prop:local inv} it would follow that $p(x)\vdash \phi(x;b_{1}^{\star})\iff\phi(x;b_{2}^{\star})$. But, because $p$ is $\tL(A)$-invariant and $p(x)\vdash \phi(x,b_{1})\wedge \neg\phi(x,b_{2})$, we also have that $p(x)\vdash \phi(x;b_{1}^{\star})\wedge\neg\phi(x;b_{2}^{\star})$, a contradiction.

By compactness, there exists $\psi_{i} \in p_{i}$, $\theta_{i}\in\Sigma$, $n\in\omega$ and $(c_{i})_{i\in n}\in N$ such that \[\forall t_{1},t_{2}\,\theta_{1}(t_{1})\wedge\theta_{2}(t_{2})\wedge(\bigwedge_{i}\phi(c_{i},t_{1})\iff\phi(c_{i},t_{2}))\wedge\psi_{1}(t_{1}) \imp \neg\psi_{2}(t_{2}).\]
In particular, because $\neg\theta_{i}(A) = \emptyset$, for all $m_{1}$ and $m_{2}\in A$, $(\bigwedge_{i}\phi(c_{i},m_{1})\iff\phi(c_{i},m_{2}))\wedge\psi_{1}(m_{1}) \imp \neg\psi_{2}(m_{2})$. For all $\epsilon : n \to 2$, let $\phi_{\epsilon}(t,c) := \bigwedge_{i}\phi(c_{i},t)^{\epsilon(i)}$ where $\phi^{1} = \phi$ and $\phi^{0} = \neg\phi$. It follows that if $\phi_{\epsilon}(A,c)\cap\psi_{1}(A)\neq\emptyset$, then $\phi_{\epsilon}(A,c)\cap\psi_{2}(A) = \emptyset$. Let
\[\theta(t,c) := \bigvee_{\phi_{\epsilon}(A,c)\cap\psi_{1}(A)\neq\emptyset} \phi_{\epsilon}(c,t).\] We have $\psi_{1}(A)\subseteq \theta(A,c)$ and $\psi_{2}(A)\cap\theta(A,c) = \emptyset$, i.e. $\psi_{1}(A)$ and $\psi_{2}(A)$ are externally $\theta$-separable. By Proposition\,\ref{prop:no new sep} and Remark\,\ref{rem:strong 3}, $\psi_{1}(A)$ and $\psi_{2}(A)$ are in fact $\xi$-separable for some $\LL(\Real(A))$-formula $\xi$. It follows that $N\models\forall t_{1},t_{2}\,(\psi_{1}(t_{1})\imp\xi(t_{1}))\wedge(\psi_{2}(t_{2})\imp\neg\xi(t_{2}))$ and, in particular $N\models \xi(b_{1})\wedge\neg\xi(b_{2})$. So $\tp[\LL](b_{1}/A)\neq\tp[\LL](b_{2}/A)$.\smallskip

Let us now conclude the proof when $A$ is not a model. Let $M\models\tT$ contain $A$ and pick any $a$ and $b\in N$ such that $a\typeq{\LL(\Real(A))} b$.

\begin{claim}
There exists $M^{\star}\typeq{\tL(A)} M$ (in particular it is a model of $\tT$ containing $A$) such that $a\typeq{\LL(\Real(M^{\star}))} b$.
\end{claim}

\begin{proof}
By compactness, it suffices, given $\chi(y,z) \in \tp[\tL](M/A)$, where $y$ is a tuple of $\Real$-variables, and $\psi_{i}(t;y)$ a finite number of $\LL$-formulas, to find tuples $m$, $n$ such that $\models\chi(m,n)\wedge\bigwedge_{i}\psi(a;m)\iff\psi(b;m)$. By hypothesis on $A$, there exists $q\in\TP[y](\restr{N}{\LL})$ which is $\LL(\Real(A))$-invariant and consistent with $\exists z\,\chi(y,z)$. Let $m\models\restr{q}{\Real(A)ab}\cup\{\chi(y)\}$. Then $\tp[\LL](a/m) = \tp[\LL](b/m)$ and $\models \exists z\,\chi(m,z)$. In particular, we can also find $n$.
\end{proof}

As $p$ is $\tL(A)$-invariant it is in particular $\tL(M^{\star})$-invariant. But, as shown above, $p$ is then $\LL(\Real(M^{\star}))$-invariant. It follows that $p \vdash \phi(x;a)\iff\phi(x;b)$.
\end{proof}

The assumption that all $\tL(A)$-definable sets are consistent with some global $\LL(A)$-invariant type may seem like a surprising assumption. Nevertheless, considering a coheir (in the sense of $\tT$, whose restriction to $\LL$ is also a coheir in the sense of $T$), this assumption always holds when $A$ is a model of $\tT$.

\section{Valued differential fields}

The main motivation for the results in the previous sections was to understand definable and invariant types in valued differential fields and more specifically those with a contractive derivation, i.e. for all $x$, $\val(\D(x))\geq\val(x)$. In \cite{ScaDValF}, Scanlon showed that the theory of valued fields with a valuation preserving derivation has a model completion named $\VDF$. It is the theory of $\D$-Henselian fields whose residue field is a model of $\DCF[0]$, whose value group is divisible and such that for all $x$ there exists a $y$ with $\D(y) = 0$ and $\val(y) = \val(x)$.

The main result that we will be needing here is that the theory $\VDF$ eliminates quantifiers in the one sorted language $\LDdiv$ consisting of the language of rings enriched with a symbol $\D$ for the derivation and a symbol $x\Div y$ interpreted as $\val(x)\leq\val(y)$. This result implies that for all substructures $A \substr M\models \VDF$ the map sending $p = \tp[\LDdiv](c/A)$ to $\tpprol[\omega]{p} := \tp[\Ldiv]((\D^{i}(c))_{i\in\omega}/A)$ is injective, where $\Ldiv := \LDdiv\sminus\{\D\}$ denotes the one sorted language of valued fields.

\begin{lemma}[nice model VDF]
Let $k\models\DCF[0]$. The Hahn field $k((t^{\Rr}))$, with derivation $\D(\sum_i a_it^i) = \sum_i \D(a_i)t^i$ and its natural valuation, is a models of $\VDF$ and its reduct to $\Ldiv$ is  uniformly stably embedded in every elementary extension.
\end{lemma}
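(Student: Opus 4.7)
My plan is to handle the two assertions separately. For the $\VDF$ axioms on $K := k((t^{\Rr}))$: the value group $\Rr$ is divisible and the residue field is $k \models \DCF[0]$ by hypothesis; for any $x \in K$ with $\val(x) = \gamma$, the monomial $t^{\gamma} \in K$ satisfies $\D(t^{\gamma}) = 0$ and $\val(t^{\gamma}) = \gamma$, giving a $\D$-constant of each valuation; and the derivation is contractive since the support of $\D(\sum_i a_i t^i) = \sum_i \D(a_i) t^i$ is contained in that of $\sum_i a_i t^i$. The only nontrivial axiom is $\D$-Henselianity, which I would obtain by invoking Scanlon's theorem that a spherically complete valued differential field with divisible value group and residue field a model of $\DCF[0]$ is $\D$-Henselian; $K$ is spherically complete as a Hahn field.

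For uniform stable embeddedness of $\restr{K}{\Ldiv}$: this reduct is a maximally complete model of $\ACVF$ with Dedekind-complete value group $\Rr$ and algebraically closed residue field $k$. Fix an $\Ldiv$-formula $\phi(x;y)$ and an elementary extension $N$ of $\restr{K}{\Ldiv}$ with $b \in N$; I want an $\Ldiv$-formula $\chi(x;z)$ depending only on $\phi$ and some $a \in K$ with $\phi(K;b) = \chi(K;a)$. By quantifier elimination in $\ACVF$ and the Swiss cheese description of one-variable definable sets in $N$, the trace $\phi(K;b)$ is a Boolean combination of traces $B \cap K$ of balls $B$ of $N$, with the number and shape of the balls bounded uniformly in $\phi$. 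Each nontrivial trace $B \cap K$ is itself a ball of $K$: maximal completeness of $K$ provides a center in $K$, Dedekind-completeness of $\Rr = \Gamma(K)$ pins the radius in $\Gamma(K)$, and algebraic closedness of $k$ handles any residue-field conditions. For multivariable $x$, I would induct on the length of $x$, reducing to the one-variable case.

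The main obstacle is the uniformity of this trace analysis, i.e., showing that the parameters $a \in K$ describing $\phi(K;b)$ can be extracted from $b$ via a fixed $\Ldiv$-formula independent of $b$ and $N$. This comes down to bounding the combinatorial complexity of the Swiss cheese decomposition of $\phi(K;b)$ in terms of $\phi$ alone, which should follow from the effective content of quantifier elimination in $\ACVF$. I expect the multivariable reduction, together with keeping track that the centers and radii produced in $K$ are themselves $\Ldiv(K)$-definable from $b$ via a $\phi$-dependent recipe, to be the most intricate step.
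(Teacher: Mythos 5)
Your treatment of the first assertion coincides with the paper's: one checks that $k((t^{\Rr}))$ is spherically complete, has divisible value group $\Rr$ and residue field $k\models\DCF[0]$, and that the derivation is contractive, and then invokes \cite[Proposition\,6.1]{ScaDValF} for $\D$-Henselianity; this part is fine. For the second assertion the paper does no work at all --- it cites \cite[Corollary\,A.7]{RidVDF} --- so you are in effect reproving a quoted result, which is legitimate, but your sketch has a genuine gap exactly at the step you defer to the end.

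The one-variable analysis you describe is essentially correct, and in fact needs less than you use: a ball of $N$ meeting $K$ has a centre in $K$ (any point of the intersection will do, and for uniform stable embeddedness the parameter $a\in K$ only has to exist, not be definably extracted from $b$), Dedekind completeness of $\Rr$ converts its radius into one lying in $\Rr$, and the bound on the number of Swiss cheeses is a standard compactness consequence of quantifier elimination. The real problem is the clause ``for multivariable $x$, I would induct on the length of $x$, reducing to the one-variable case.'' Uniform stable embeddedness concerns traces $\phi(K^{|x|};b)$ for tuples $x$ of arbitrary length, and the Swiss-cheese description is special to $|x|=1$. There is no formal induction deriving the $n$-variable statement from the unary one: to run such an induction you would need the fibres $\phi(K^{n-1},a;b)$, $a\in K$, to be traced by a single $K$-definable family uniformly in $a$ and $b$, which is precisely an instance of the statement being proved. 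This multivariable step is where maximal (spherical) completeness is actually used in the known proofs --- via the control of $n$-types over maximally complete algebraically closed valued fields by the value group and residue field, as in the appendix of \cite{RidVDF} --- and it is the mathematical content of the lemma. As written, your argument establishes only the unary case.
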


\begin{proof}
The fact that $k((t^{\Rr}))\models\VDF$ follows from the fact that its residue field $k$ is a model of $\DCF[0]$, its value group $\Rr$ is a divisible ordered Abelian group and that Hahn fields are spherically complete, cf. \cite[Proposition\,6.1]{ScaDValF}.

The fact that $k((t^{\Rr}))$ is uniformly stably embedded in every elementary extension is shown in \cite[Corollary A.7]{RidVDF}.
\end{proof}

Recall that Haskell, Hrushovski and Macpherson \cite{HasHruMacACVF} showed that algebraically closed valued fields eliminate imaginaries provided the geometric sorts are added. We will be denoting by $\Geom$ the set of all geometric sorts.

\begin{proposition}[def imp def VDF]
Let $A = \acleq[\LDdiv](A) \subseteq M\models\VDF$. A type $p\in\TP<\Ldiv>(M)$ is $\LDdiveq(A)$-definable if and only if it is $\Ldiveq(\Geom(A))$-definable.
\end{proposition}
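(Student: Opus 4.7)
The plan is to derive this proposition as a direct instance of Corollary~\ref{cor:def imp def}. The right-to-left implication is immediate: every geometric sort is an $\LDdiv$-imaginary, so $\Geom(A) \subseteq A$, and every $\Ldiveq$-formula is also an $\LDdiveq$-formula, whence any $\Ldiveq(\Geom(A))$-definable type is automatically $\LDdiveq(A)$-definable.

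For the nontrivial direction, I would present $\ACVF$ in the language $\LL$ obtained by enriching $\Ldiv$ with (codes for) the geometric sorts of \cite{HasHruMacACVF}, so that $T := \ACVF$ becomes a complete, $\NIP$ $\LL$-theory that eliminates imaginaries. Then set $\tL := \LDdiv$ over the same sorts and $\tT := \VDF$. In this presentation, the set $\Real$ of $\LL$-sorts is exactly $\Geom$, so $\Real(A) = \Geom(A)$. The only nontrivial hypothesis of Corollary~\ref{cor:def imp def} is the existence of a model of $\tT$ whose $\LL$-reduct is uniformly stably embedded in every elementary extension, and this is precisely what Lemma~\ref{lem:nice model VDF} provides via the Hahn field $k((t^{\Rr}))$ with $k\models\DCF[0]$. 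The remaining bookkeeping hypothesis $A = \dcleq[\LDdiv](A)$ is weaker than the assumption $A = \acleq[\LDdiv](A)$, so it holds automatically; and after replacing $M$ by a sufficiently saturated elementary extension if needed (without affecting definability), we are squarely in the setting of the corollary.

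To conclude, I would reduce to $\phi$-types: a complete type is definable over a given parameter set iff each of its $\phi$-parts is definable over that set. Thus, assuming $p\in\TP<\Ldiv>(M)$ is $\LDdiveq(A)$-definable, each $\phi$-part $p_{\phi}$, for $\phi(x;t)$ an $\Ldiv$-formula, is $\LDdiveq(A)$-definable. Corollary~\ref{cor:def imp def} then yields an $\Ldiv(\Geom(A))$-definition of $p_{\phi}$, which is a fortiori an $\Ldiveq(\Geom(A))$-definition, and letting $\phi$ range over all $\Ldiv$-formulas gives the required $\Ldiveq(\Geom(A))$-definability of $p$. There is essentially no obstacle here beyond checking that the geometric-sorts presentation of $\ACVF$ slots cleanly into the abstract setup of the corollary; the substantive content is already packaged in Section~\ref{sec:ext sep} and in Lemma~\ref{lem:nice model VDF}.
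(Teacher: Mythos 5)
Your proof is correct and takes essentially the same route as the paper's, which simply records the trivial right-to-left direction and cites Corollary~\ref{cor:def imp def} together with Lemma~\ref{lem:nice model VDF}; you merely make explicit the instantiation ($T=\ACVF$ presented with the geometric sorts so as to eliminate imaginaries, $\tT=\VDF$, $\Real=\Geom$) and the reduction to $\phi$-types that the paper leaves implicit. The one point both you and the paper pass over silently is that Lemma~\ref{lem:nice model VDF} is stated for the one-sorted reduct while the corollary is applied to the geometric-sorts presentation, but this is a routine transfer and not a genuine gap.
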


\begin{proof}
If $p$ is $\Ldiveq(\Geom(A))$-definable then it is in particular $\LDdiveq(A)$-definable. The reciprocal implication follows immediately from Corollary\,\ref{cor:def imp def} and Lemma\,\ref{lem:nice model VDF}.
\end{proof}

An immediate corollary of this proposition is an elimination of imaginaries result for canonical bases of definable types in $\VDF$:

\begin{corollary}[equiv def]
Let $A = \acleq[\LDdiv](A) \subseteq M\models\VDF$ and $p\in\TP<\LDdiv>(M)$. The following are equivalent:
\begin{thm@enum}
\item $p$ is $\LDdiveq(A)$-definable;
\item $\tpprol[\omega](p)$ is $\Ldiveq(\Geom(A))$-definable;
\item $p$ is $\LDdiveq(\Geom(A))$-definable.
\end{thm@enum}
\end{corollary}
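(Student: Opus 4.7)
The plan is to prove the cyclic chain $(\text{iii}) \Rightarrow (\text{i}) \Rightarrow (\text{ii}) \Rightarrow (\text{iii})$. Two of the three implications are essentially bookkeeping using quantifier elimination; the content sits in $(\text{i}) \Rightarrow (\text{ii})$, which is where Proposition~\ref{prop:def imp def VDF} enters.

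The implication $(\text{iii}) \Rightarrow (\text{i})$ is immediate, since $\Geom(A) \subseteq A$, so any $\LDdiveq(\Geom(A))$-definition is in particular an $\LDdiveq(A)$-definition.

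For $(\text{i}) \Rightarrow (\text{ii})$, I would first argue that definability of $p$ in $\LDdiv$ transfers mechanically to definability of $\tpprol[\omega](p)$ in $\LDdiv$. Given any $\Ldiv$-formula $\psi(\bar{y};z)$ (using only finitely many of the prolongation variables $y_i$), set $\phi(x;z) := \psi((\D^i(x))_{i};z)$, which is an $\LDdiv$-formula in $x$. Then $\psi(\bar{y};z)\in \tpprol[\omega](p)$ if and only if $\phi(x;z)\in p$, so the $\psi$-definition of $\tpprol[\omega](p)$ is literally the $\phi$-definition of $p$. Hence $\tpprol[\omega](p)$ is $\LDdiveq(A)$-definable as an $\Ldiv$-type, and Proposition~\ref{prop:def imp def VDF} (applied to the $\Ldiv$-type $\tpprol[\omega](p)$) upgrades this to $\Ldiveq(\Geom(A))$-definability.

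For $(\text{ii}) \Rightarrow (\text{iii})$, I would invoke the quantifier elimination for $\VDF$ in $\LDdiv$ recalled just before Lemma~\ref{lem:nice model VDF}: every $\LDdiv$-formula $\phi(x;t)$ is equivalent modulo $\VDF$ to a quantifier-free $\LDdiv$-formula, which (since $\D$ appears only through terms of the form $\D^i(x)$) can be written as $\psi((\D^i(x))_{i\leq N};t)$ for some $\Ldiv$-formula $\psi(\bar y;t)$. Thus $\phi(x;t)\in p$ iff $\psi(\bar y;t)\in \tpprol[\omega](p)$, and by (ii) the latter condition is cut out by an $\Ldiveq(\Geom(A))$-formula in $t$, giving the required $\LDdiveq(\Geom(A))$-definition of $p$.

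The only non-trivial step is $(\text{i}) \Rightarrow (\text{ii})$, whose weight rests entirely on Proposition~\ref{prop:def imp def VDF}; everything else is a formal translation between $\LDdiv$- and $\Ldiv$-syntax via quantifier elimination and the prolongation map, so I do not anticipate any genuine obstacle beyond verifying that these translations preserve the parameter sets as claimed.
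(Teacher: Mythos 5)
Your proposal is correct and follows essentially the same route as the paper: (iii)\,$\imp$\,(i) is trivial, (i)\,$\imp$\,(ii) transfers the definition scheme along the prolongation map and then applies Proposition\,\ref{prop:def imp def VDF}, and (ii)\,$\imp$\,(iii) uses quantifier elimination in $\LDdiv$. The only slight imprecision is in (ii)\,$\imp$\,(iii): the quantifier-free formula may apply $\D$ to the parameter $t$ as well as to $x$, so the correct reduction is to $\phi(\prol[\omega](x);\prol[\omega](t))$ and the resulting defining formula is an $\Ldiveq(\Geom(A))$-formula evaluated at $\prol[\omega](t)$ --- which is harmless, since that is still an $\LDdiveq(\Geom(A))$-formula in $t$, exactly the definability claimed in (iii).
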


\begin{proof}
The implication (iii) $\imp$ (i) is trivial. Let us now assume (i). An $\Ldiv(M)$-formula $\phi(\uple{x};m)$ is in $\tpprol[\omega](p)$ if and only if $\phi(\prol[\omega](x);m) \in p$, where $\prol[\omega](x) = (\D^i(x))_{i\in\omega}$. It follows that $\tpprol[\omega](p)$ is $\LDdiveq(A)$-definable. By Proposition\,\ref{prop:def imp def VDF}, $\tpprol[\omega](p)$ is in fact $\Ldiveq(\Geom(A))$-definable.

Let us now assume (ii) and let $\psi(x;m)$ be any $\LDdiv(M)$-formula. By quantifier elimination, $\psi(x;m)$ is equivalent to $\phi(\prol[\omega](x);\prol[\omega](m))$ for some $\Ldiv$-formula $\phi(\uple{x};\uple{t})$. Therefore $\psi(x;m)\in p$ if and only if $\phi(\uple{x};\prol[\omega](m))\in\tpprol[\omega](p)$ and hence $p$ is $\LDdiveq(\Geom(A))$-definable.
\end{proof}

In \cite{RidVDF}, it is shown that there are enough definable types to use this partial elimination of imaginaries result to obtain elimination of imaginaries to the geometric sorts for $\VDF$.

Thanks to the result in Section\,\ref{sec:ext sep} and results from \cite{RidVDF}, we can also characterise invariant types in $\VDF$. Note that, although the main results in \cite{RidVDF} depend on the results proved in the present paper, the result from \cite{RidVDF} that we will be using in what follows does not.

\begin{proposition}[inv imp inv VDF]
Let $M\models\VDF$ and $A = \acleq[\LDdiv](A) \subseteq \eq{M}$. A type $p\in\TP<\Ldiv>(M)$ is $\LDdiveq(A)$-invariant if and only if it is $\Ldiveq(\Geom(A))$-invariant.
\end{proposition}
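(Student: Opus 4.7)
The reverse implication is immediate, since $\Geom(A)\subseteq A$ and $\Ldiv\subseteq\LDdiv$, so any $\Ldiveq(\Geom(A))$-invariant type is automatically $\LDdiveq(A)$-invariant.

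For the forward implication, my plan is to apply Proposition\,\ref{prop:inv imp inv} to $T=\ACVF$, $\LL=\Ldiv$, $\tT=\VDF$, $\tL=\LDdiv$, with $\Real=\Geom$, working in the imaginary expansion so that the parameter set $A\subseteq\eq{M}$ is allowed (exactly as in the proof of Corollary\,\ref{cor:def imp def}, using that $\ACVF$ eliminates imaginaries in $\Geom$). Three hypotheses have to be verified: (a) $T$ is $\NIP$, which is classical for $\ACVF$; (b) there exists a model of $\tT$ whose $\Ldiv$-reduct is uniformly stably embedded in every elementary extension, which is exactly Lemma\,\ref{lem:nice model VDF}; and (c) every $\LDdiveq(A)$-definable set in a Cartesian power of $\Geom$ is consistent with some global $\Ldiveq(\Geom(A))$-invariant type.

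Only condition (c) is nontrivial, and this is where the main obstacle lies. The natural attempt is the following: given a nonempty $\LDdiveq(A)$-definable $X\subseteq\Geom^{k}$, pick a realization $c\in X$ in a sufficiently saturated elementary extension and try to extend $\tp[\Ldiv](c/\Geom(A))$ to a global $\Ldiveq(\Geom(A))$-invariant $\Ldiv$-type still containing the $\tL$-formula defining $X$. The paper's introductory remark (``Thanks to the result in Section\,\ref{sec:ext sep} and results from \cite{RidVDF}, we can also characterise invariant types in $\VDF$'') signals that exactly this density property for invariant types in $\VDF$ over $\acleq[\LDdiv]$-closed parameter sets is furnished by a suitable result from \cite{RidVDF}, which I would simply cite here. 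Once (a), (b) and (c) are in place, applying Proposition\,\ref{prop:inv imp inv} formula by formula to the $\phi$-reducts of $p$ yields the desired $\Ldiveq(\Geom(A))$-invariance of $p$, completing the proof.
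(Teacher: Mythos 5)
Your overall strategy matches the paper's: the easy direction is as you say, and the forward direction is indeed an application of Proposition~\ref{prop:inv imp inv} to $T=\ACVF$, $\tT=\VDF$, with $\Real=\Geom$ and the uniformly stably embedded model supplied by Lemma~\ref{lem:nice model VDF}. The gap is in hypothesis (c), which you correctly single out as the crux but then discharge by appeal to a citation that does not exist in the form you need. What \cite{RidVDF} actually provides (Proposition~9.7 there) is a density statement for \emph{definable} types: every $\LDdiveq(A)$-definable set is consistent with a global $\LDdiveq(A)$-\emph{definable} $\Ldiv$-type. That is a statement internal to $\VDF$, over the full language and the full parameter set $A$; it says nothing about invariance over the reduct parameters $\Geom(A)$ in the reduct language, which is what hypothesis (c) demands. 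A density result for $\Ldiveq(\Geom(A))$-invariant types cannot be quoted off the shelf from \cite{RidVDF}: descent from $\LDdiveq(A)$ to $\Ldiveq(\Geom(A))$ is exactly the kind of statement the present paper exists to prove, so assuming it as an external input is close to circular.

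The missing step is the bridge between the two: having obtained an $\LDdiveq(A)$-definable $\Ldiv$-type consistent with the given set, one applies Proposition~\ref{prop:def imp def VDF} --- the definable analogue, already established at this point --- to conclude that this type is in fact $\Ldiveq(\Geom(A))$-definable, hence in particular $\Ldiveq(\Geom(A))$-invariant, which is what hypothesis (c) asks for. This two-step argument (density of definable types from \cite{RidVDF}, then descent of definability via Proposition~\ref{prop:def imp def VDF}) is how the paper verifies (c); your ``natural attempt'' of directly extending $\tp[\Ldiv](c/\Geom(A))$ to an invariant global type compatible with the $\LDdiveq$-formula has no evident justification without this detour through definable types. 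You should make the use of Proposition~\ref{prop:def imp def VDF} explicit; once that is done, the rest of your argument goes through as written.
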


\begin{proof}
To prove the non obvious implication, by Proposition\,\ref{prop:inv imp inv}, we have to show that $\VDF$ has a model whose underlying valued field is uniformly stably embedded in any elementary extension --- that is tackled in Lemma\,\ref{lem:nice model VDF} --- and that any $\LDdiveq(A)$-definable set (in the sort $\K$) is consistent with an $\Ldiveq(\Geom(A))$-invariant $\Ldiv$-type. It follows from \cite[Proposition\,9.7]{RidVDF} (applied to $T = \ACVF$ and $\tT = \VDF$) that any $\LDdiveq(A)$-definable set (in the sort $\K$) is consistent with an $\LDdiveq(A)$-definable $\Ldiv$-type. But, by Proposition\,\ref{prop:def imp def VDF}, such a type is $\Ldiveq(\Geom(A))$-definable.
\end{proof}

\begin{corollary}[equiv inv]
Let $A = \acleq[\LDdiv](A) \subseteq M\models\VDF$ and $p\in\TP<\LDdiv>(M)$. The following are equivalent:
\begin{thm@enum}
\item $p$ is $\LDdiveq(A)$-invariant;
\item $\tpprol[\omega](p)$ is $\Ldiveq(\Geom(A))$-invariant;
\item $p$ is $\LDdiveq(\Geom(A))$-invariant.
\end{thm@enum}
\end{corollary}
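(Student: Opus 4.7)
The plan is to mimic the proof of Corollary~\ref{cor:equiv def} almost verbatim, replacing ``definable'' by ``invariant'' throughout and substituting Proposition~\ref{prop:inv imp inv VDF} for Proposition~\ref{prop:def imp def VDF}. The implication (iii)~$\imp$~(i) is immediate since $\Geom(A)\subseteq A$.

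For (i)~$\imp$~(ii), I would first record the purely formal observation that $\tpprol[\omega](p)$ inherits $\LDdiveq(A)$-invariance from $p$. Indeed, an $\Ldiv(M)$-formula $\phi(\uple{x};m)$ lies in $\tpprol[\omega](p)$ if and only if $\phi(\prol[\omega](x);m)\in p$, and for any $\LDdiveq(A)$-automorphism $\sigma$ we have $\phi(\prol[\omega](x);m)\in p\iff\phi(\prol[\omega](x);\sigma(m))\in p$; unwinding the definition, this says that $\tpprol[\omega](p)$ is $\LDdiveq(A)$-invariant. Proposition~\ref{prop:inv imp inv VDF} then upgrades the $\LDdiveq(A)$-invariance of the $\Ldiv$-type $\tpprol[\omega](p)$ to $\Ldiveq(\Geom(A))$-invariance.

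For (ii)~$\imp$~(iii), I would invoke quantifier elimination in $\LDdiv$: any $\LDdiv(M)$-formula $\psi(x;m)$ is equivalent to $\phi(\prol[\omega](x);\prol[\omega](m))$ for some $\Ldiv$-formula $\phi(\uple{x};\uple{t})$. Any $\LDdiveq(\Geom(A))$-automorphism $\sigma$ commutes with $\D$ and hence sends $\prol[\omega](m)$ to $\prol[\omega](\sigma(m))$; in particular, its restriction is an $\Ldiveq(\Geom(A))$-automorphism. The assumed $\Ldiveq(\Geom(A))$-invariance of $\tpprol[\omega](p)$ then yields $\phi(\uple{x};\prol[\omega](m))\in\tpprol[\omega](p)\iff\phi(\uple{x};\prol[\omega](\sigma(m)))\in\tpprol[\omega](p)$, i.e.\ $\psi(x;m)\in p\iff\psi(x;\sigma(m))\in p$, which is exactly $\LDdiveq(\Geom(A))$-invariance of $p$.

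There is no serious obstacle left: all of the genuinely model-theoretic content has been absorbed into Proposition~\ref{prop:inv imp inv VDF}, whose proof uses the stably embedded model of Lemma~\ref{lem:nice model VDF} together with the existence result from \cite[Proposition~9.7]{RidVDF}. What remains here is only the bookkeeping that translates between $p$ and $\tpprol[\omega](p)$ through quantifier elimination, exactly parallel to the definability case treated in Corollary~\ref{cor:equiv def}.
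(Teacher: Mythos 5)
Your proposal is correct and is exactly the paper's argument: the paper's proof of this corollary consists of the single remark that it is proved as in Corollary~\ref{cor:equiv def} with Proposition~\ref{prop:inv imp inv VDF} in place of Proposition~\ref{prop:def imp def VDF}, which is precisely the substitution you carry out. The extra bookkeeping you supply (transfer of invariance to $\tpprol[\omega](p)$, and the fact that an $\LDdiveq(\Geom(A))$-automorphism commutes with $\D$ and restricts to an $\Ldiveq(\Geom(A))$-automorphism) is the correct unwinding of that remark.
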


\begin{proof}
This is proved as in Corollary\,\ref{cor:equiv def}, except that Proposition\,\ref{prop:inv imp inv VDF} is used instead of Proposition\,\ref{prop:def imp def VDF}.
\end{proof}

We can now give a characterisation of forking in $\VDF$.

\begin{corollary}
Let $M\models\VDF$ be $\card{A}^{+}$-saturated, $A = \acleq[\LDdiv](A) \subseteq M$ and $\phi(x)$ be an $\LDdiv(M)$-formula. Then $\phi(x)$ does not fork over $A$ if and only if for all $\Ldiv(M)$-formulas such that $\phi(x)$ is equivalent to $\psi(\prol[\omega](x))$, $\psi(\uple{x})$ does not fork over $\Geom(A)$ (in $\ACVF$).
\end{corollary}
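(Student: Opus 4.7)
The plan is to invoke the standard $\NIP$ characterisation of non-forking --- in any $\NIP$ theory and for $B=\acleq(B)$, a formula does not fork over $B$ iff it is contained in a global $B$-invariant type --- simultaneously for $(A,\VDF)$ and $(\Geom(A),\ACVF)$ (both are $\NIP$), using Corollary~\ref{cor:equiv inv} as the bridge between the two invariance notions.

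The direction $(\Rightarrow)$ will be immediate: from an $\LDdiveq(A)$-invariant $\LDdiv$-extension $p$ of $\phi$, Corollary~\ref{cor:equiv inv} makes $\tpprol[\omega](p)$ an $\Ldiveq(\Geom(A))$-invariant $\Ldiv$-type, and any $\psi$ with $\phi(x)\equiv\psi(\prol[\omega](x))$ lies in $\tpprol[\omega](p)$, so does not fork over $\Geom(A)$ in $\ACVF$.

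The converse is the main difficulty. A $\Geom(A)$-invariant $\Ldiv$-type extending a given $\psi$ need not be ``prolongation-form'', i.e.\ need not lie in the image of the continuous injection $\tpprol[\omega]\colon\TP<\LDdiv>(M)\to\TP<\Ldiv>(M)$ (well defined by $\VDF$'s quantifier elimination), so Corollary~\ref{cor:equiv inv} does not directly lift an $\ACVF$ non-forking witness back to $\VDF$; the ``for all $\psi$'' universal quantifier in the hypothesis is precisely what will let us get around this.

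I would argue the converse by contraposition, through a Stone-space separation in $\TP<\Ldiv>(M)$ (types in the $\omega$-tuple $\uple{x}$ of $\K$-variables). Let $P$ be the image of $\tpprol[\omega]$, closed as the continuous image of the compact space $\TP<\LDdiv>(M)$, and let $I$ be the closed set of $\Ldiveq(\Geom(A))$-invariant types. If $\phi$ forks over $A$, then by Corollary~\ref{cor:equiv inv} we have $P\cap I\cap[\psi_0]=\emptyset$ for any fixed $\psi_0$ with $\phi\equiv\psi_0(\prol[\omega](x))$; zero-dimensionality of the Stone space then yields a clopen $[\eta]$, cut out by some $\Ldiv(M)$-formula $\eta(\uple{x})$, with $P\cap[\psi_0]\subseteq[\eta]$ and $[\eta]\cap I\cap[\psi_0]=\emptyset$. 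Setting $\psi:=\psi_0\wedge\eta$, the first inclusion forces $\eta(\prol[\omega](c))$ for every $c\in M$ realising $\phi$, which by completeness of $\VDF$ promotes to $\phi(x)\equiv\psi(\prol[\omega](x))$, while the second says $\psi$ has no $\Geom(A)$-invariant $\Ldiv$-extension, hence forks over $\Geom(A)$ in $\ACVF$ --- contradicting the hypothesis.
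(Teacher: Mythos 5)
Your proof is correct and is essentially the paper's argument run in contrapositive form: both turn on the closedness of the image of $\tpprol[\omega]$, on Corollary\,\ref{cor:equiv inv}, and on the NIP dictionary between non-forking and extendability to an invariant type, the only difference being that the paper directly produces an invariant type consistent with every admissible $\psi$ and shows it lies in that image, while you separate the image from the set of invariant types by a clopen set so as to manufacture a single forking $\psi$. The one point to state more carefully is the ``standard NIP characterisation'' over algebraically closed imaginary bases, which requires Lascar strong types to coincide with strong types; the paper justifies this for $\VDF$ by invoking its invariant extension property.
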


\begin{proof}
Let us first assume that $\phi(x)$ does not fork over $A$ and let $p$ be a global non forking extension of $\phi(x)$. As $\VDF$ is $\NIP$, by \cite[Proposition\,2.1]{HruPilNip}, $p$ is invariant under all automorphisms that fix Lascar strong type over $A$. But, because $\VDF$ has the invariant extension property (cf. \cite[Theorem\,2.14]{RidVDF}), Lascar strong type and strong type coincide in $\VDF$ (see \cite[Proposition\,2.13]{HruPilNip}), hence $p$ is $\LDdiveq(A)$-invariant. It follows from Corollary\,\ref{cor:equiv inv} that $\tpprol[\omega](p)$ is $\Ldiveq(\Geom(A))$-invariant and hence $\psi(\uple{x})$ does not fork over $\Geom(A)$.

Let us now assume that no $\psi(\uple{x})$ such that $\phi(x)$ is equivalent to $\psi(\prol[\omega](x))$ forks over $\Geom(A)$. Then there exists $q\in\TP[\uple{x}]<\Ldiv>(M)$ which is $\Ldiveq(\Geom(A))$-invariant and consistent with all such formulas $\psi(\uple{x})$. Now, the image of the continuous map $\tpprol[\omega] : \TP[x]<\LDdiv>(M)\to\TP[\uple{x}]<\Ldiv>(M)$ is closed and if $\chi(\uple{x})$ is an $\Ldiv(M)$-formula containing the image of $\tpprol[\omega]$ and $\psi(\uple{x})$ is as above, $\chi(\prol[\omega](x))\wedge\psi(\prol[\omega](x))$ is also equivalent to $\phi(x)$. Therefore, $q = \tpprol[\omega](p)$ for some $\LDdiveq(A)$-invariant $p\in\TP[x]<\LDdiv>(M)$. This type $p$ implies $\phi(x)$ and hence $\phi(x)$ does not fork over $A$.
\end{proof}

\begin{remark}
The previous corollary is somewhat unsatisfying as one needs to consider all possible ways of describing $\phi(x)$ as the prolongation points of an $\Ldiv$-formula $\psi$ (with parameters in a saturated model) to conclude whether $\phi$ forks or not.

Considering only one such $\psi$ cannot be enough. For example, consider any definable set $\phi(x)$ forking (in $\VDF$) over $A$ and let $\psi(x_{0},x_{1})  = (\val(x_{0})\geq 0 \wedge \val(x_{1}) < 0) \vee \phi(x_{0})$. Then the set $\{x\in M\mid M\models\psi(x,\D(x))\} = \phi(M)$ but $\psi$ does not fork over $A$ (in $\ACVF$). The obstruction here might seem frivolous, but it is the core of the problem. Indeed, it is not clear if there is a way, given $\phi$ to find a formula $\psi$ as above that does not contain "large" subsets with no prolongation points.
\end{remark}

\printbibli
\end{document}